\documentclass[pdflatex,sn-mathphys-num]{sn-jnl}% Math and Physical Sciences Numbered Reference Style
%%\documentclass[pdflatex,sn-mathphys-ay]{sn-jnl}% Math and Physical Sciences Author Year Reference Style
%%\documentclass[pdflatex,sn-aps]{sn-jnl}% American Physical Society (APS) Reference Style
%%\documentclass[pdflatex,sn-vancouver-num]{sn-jnl}% Vancouver Numbered Reference Style
%%\documentclass[pdflatex,sn-vancouver-ay]{sn-jnl}% Vancouver Author Year Reference Style
%%\documentclass[pdflatex,sn-apa]{sn-jnl}% APA Reference Style
%%\documentclass[pdflatex,sn-chicago]{sn-jnl}% Chicago-based Humanities Reference Style

%%%% Standard Packages
%%<additional latex packages if required can be included here>

\usepackage{graphicx}%
\usepackage{multirow}%
\usepackage{amsmath,amssymb,amsfonts}%
\usepackage{amsthm}%
\usepackage{mathrsfs}%
\usepackage[title]{appendix}%
\usepackage{xcolor}%
\usepackage{textcomp}%
\usepackage{manyfoot}%
\usepackage{booktabs}%
\usepackage{algorithm}%
\usepackage{algorithmicx}%
\usepackage{algpseudocode}%
\usepackage{listings}%
%%%%

%%%%%=============================================================================%%%%
%%%%  Remarks: This template is provided to aid authors with the preparation
%%%%  of original research articles intended for submission to journals published 
%%%%  by Springer Nature. The guidance has been prepared in partnership with 
%%%%  production teams to conform to Springer Nature technical requirements. 
%%%%  Editorial and presentation requirements differ among journal portfolios and 
%%%%  research disciplines. You may find sections in this template are irrelevant 
%%%%  to your work and are empowered to omit any such section if allowed by the 
%%%%  journal you intend to submit to. The submission guidelines and policies 
%%%%  of the journal take precedence. A detailed User Manual is available in the 
%%%%  template package for technical guidance.
%%%%%=============================================================================%%%%

%% as per the requirement new theorem styles can be included as shown below
\theoremstyle{thmstyleone}%
\newtheorem{theorem}{Theorem}%  meant for continuous numbers
%%\newtheorem{theorem}{Theorem}[section]% meant for sectionwise numbers
%% optional argument [theorem] produces theorem numbering sequence instead of independent numbers for Proposition
% 
%%\newtheorem{proposition}{Proposition}% to get separate numbers for theorem and proposition etc.
\newtheorem{corollary}[theorem]{Corollary}% 

\theoremstyle{thmstyletwo}%

\theoremstyle{thmstylethree}%

\raggedbottom
%%\unnumbered% uncomment this for unnumbered level heads

\begin{document}

\title[A Markov Chain Arising from the Hopf Square Map on a Non-cocommutative Quantum Group]{A Markov Chain Arising from the Hopf Square Map on a Non-cocommutative Quantum Group}

%%=============================================================%%
%% GivenName	-> \fnm{Joergen W.}
%% Particle	-> \spfx{van der} -> surname prefix
%% FamilyName	-> \sur{Ploeg}
%% Suffix	-> \sfx{IV}
%% \author*[1,2]{\fnm{Joergen W.} \spfx{van der} \sur{Ploeg} 
%%  \sfx{IV}}\email{iauthor@gmail.com}
%%=============================================================%%

\author{\fnm{Donovan} \sur{Snyder}}\email{dsnyd15@ur.rochester.edu}

\affil{\orgdiv{Math Department}, \orgname{University of Rochester}, \orgaddress{\street{500 Joseph C. Wilson Blvd.}, \city{Rochester}, \postcode{14627}, \state{NY}, \country{USA}}}

%%==================================%%
%% Sample for unstructured abstract %%
%%==================================%%

\abstract{Expanding upon the rich history of algebraic techniques in probability, we show the existence of and construct a Markov chain using the Hopf square map on a quantum group that is both non-commutative and non-cocommutative.
This extends the work of Diaconis, Pang, and Ram to other Hopf algebras.
The new, one-dimensional chain requires different analytical approaches. In this case we use standard martingale theory to prove the existence of a phase transition and prove bounds on the expected growth rates.}

\keywords{Markov Chains, Hopf Algebras, Phase Transitions, Martingales}

%%\pacs[JEL Classification]{D8, H51}

%%\pacs[MSC Classification]{35A01, 65L10, 65L12, 65L20, 65L70}

\maketitle

\bmhead{Acknowledgements}

I would like to thank Sarada Rajeev, Jonathan Pakianathan, and Arjun Krishnan for their extensive help, discussions, and feedback throughout this work.

\section{\label{Introduction}Introduction}

Algebraic techniques have significantly expanded the number of models in probability theory and statistical mechanics that can be completely solved.
Even further, many models exhibit a phase transition which was originally found through the use of algebraic techniques.
The Ising model is a prime example, where the transfer matrix was used to completely solve the one-dimensional nearest-neighbor model \cite{Ising1925, KramersWannier}.
Algebraic structures, somewhat similar to those used in this paper, also played a key role in the construction and solving of the Potts model \cite{Potts1952}.
However, these models do not exhibit a phase transition in one dimension.
% : in general, it is rare to see phase transitions in one dimensional models (for example, see the Mermin-Wagner theorem \cite{MWT}).

On the other hand, the Ising model in two dimensions does exhibit a phase transition, as shown in Onsager's seminal result \cite{Onsager1944} (also see \cite{Kaufman1949}) using the irreducible representations of a matrix algebra.
Other algebraic and combinatorial techniques were used to study this model, such as the use of transfer matrices \cite{SchultzLieb1964} and (what came to be known as) the Yang-Baxter equation \cite{BaxterEnting1978}.
The Yang-Baxter equation has been used throughout probability theory more recently, particularly in the study of the stochastic six-vertex model \cite{SixVertex} and related integrable systems \cite{YangBaxterUse}.

Our goal is to examine and expand the use of Hopf algebras and the associated structures.
To do so, we build on the work of Diaconis, Pang, and Ram in \cite{DiaconisPangRam} who use Hopf algebras in a seemingly independent way from their use in the Yang-Baxter equation.
The Hopf square map is a natural linear operator on the Hopf algebra, and its matrix representation is sometimes the Markov matrix for standard processes like card shuffling and rock breaking.
This result follows from the examination of how the Hopf square behaves on familiar Hopf algebras.
Further, by using properties of the Hopf algebra, they can study the chains in new ways.

In this paper, we construct and analyze a Markov chain that arises from the Hopf square map on a non-commutative and non-cocommutative Hopf algebra.
This type of Hopf algebra is not analyzed in \cite{DiaconisPangRam} (see their Example 6.6) and to our knowledge is not addressed elsewhere in the literature.

We use the Hopf algebra \(U_q(\mathfrak{sl}_2)\), a quantum deformation of the universal enveloping algebra of the Lie algebra \(\mathfrak{sl}_2\).
That we can construct a Markov chain from the Hopf square is not obvious because the entries of such a matrix are not guaranteed to be non-negative.
The result is a one-dimensional growth process $X_n$ on $\{0,1,2,\ldots\}$ with transition probabilities depending on a deformation parameter $q$, given by
\begin{equation}\label{eq:probs}
    T(i,i+1) = \frac{1}{q^{i}+1} \qquad T(i,i) = \frac{q^{i}}{q^{i}+1}
\end{equation}
and initial condition $X_0=0$.

Unlike Diaconis et al., we are unable to use the properties of the Hopf algebra to study the resulting chain.
In their case, the structure of the Hopf algebras allowed for the diagonalization of the Markov chain.
Since we no longer have these assumptions, we must use more standard techniques.
Using martingale methods, we are able to characterize a phase transition as the deformation parameter $q$ is varied.
\begin{theorem}\label{thm:introTheorem}
    There exists a Markov chain \(X_n\) that comes from the Hopf square map on \(U_q(\mathfrak{sl}_2)\).
    The Markov chain $X_n$ exhibits a phase transition:
    \begin{equation}
        \lim_{n\rightarrow\infty}\frac{\mathbb{E}(X_{n})}{n}=
        \begin{cases}
            1           & 0<q<1 \\
            \frac{1}{2} & q=1   \\
            0           & q>1
        \end{cases}
    \end{equation}
\end{theorem}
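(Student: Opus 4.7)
The plan is to reduce everything to a sum involving the jump probabilities. Since $X_n$ increments only by $0$ or $+1$, the predictable process $A_n := \sum_{k=0}^{n-1} p_{X_k}$ with $p_i := T(i,i+1) = 1/(q^i+1)$ is the Doob compensator of $X_n$, so $M_n := X_n - A_n$ is a martingale and $\mathbb{E}[X_n] = \sum_{k=0}^{n-1} \mathbb{E}[p_{X_k}]$. The theorem then amounts to estimating the Cesàro average of $\mathbb{E}[p_{X_k}]$ in each regime.

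Two of the three cases are quick. For $q=1$, $p_i \equiv 1/2$ and $\mathbb{E}[X_n] = n/2$ exactly. For $q<1$, I would observe that $p_i \in [1/2, 1)$ with $p_i \to 1$, and verify that $X_n \to \infty$ almost surely: the chain is monotone nondecreasing and the sojourn at state $i$ is geometric with parameter $p_i > 0$, hence a.s.\ finite, so the chain leaves every finite level. Bounded convergence then gives $\mathbb{E}[p_{X_k}] \to 1$, and Cesàro averaging gives $\mathbb{E}[X_n]/n \to 1$.

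For $q>1$, where $p_i \to 0$ and the chain slows dramatically, I would introduce the exponential process $V_n := q^{X_n}$ and compute
\[ \mathbb{E}[V_{n+1}\mid \mathcal{F}_n] - V_n \;=\; (q-1)\, p_{X_n}\, q^{X_n} \;=\; \frac{(q-1)\,q^{X_n}}{q^{X_n}+1} \;\le\; q-1. \]
Iterating this uniform bound on the Doob compensator gives $\mathbb{E}[V_n] \le 1 + (q-1)n$. Since $x\mapsto q^x$ is convex, Jensen's inequality yields $q^{\mathbb{E}[X_n]} \le \mathbb{E}[V_n]$, so $\mathbb{E}[X_n] \le \log_q\!\bigl(1+(q-1)n\bigr) = O(\log n)$, and dividing by $n$ delivers the limit $0$.

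I expect the $q>1$ case to be the main obstacle: the sojourn time at state $i$ has mean $q^i+1$, so heuristically $X_n$ should grow only logarithmically in $n$, and one needs a clean way to convert that slowdown into a rigorous upper bound on $\mathbb{E}[X_n]$. The delicate point is choosing the right test function. A polynomial in $X_n$ would produce an unbounded compensator, whereas $q^{X_n}$ is calibrated exactly so that the factor $q^{X_n}$ coming from the multiplicative jump cancels against the $q^{X_n}$ in the denominator of $p_{X_n}$, leaving a uniformly bounded Doob increment; Jensen then transfers the linear bound on $\mathbb{E}[q^{X_n}]$ into the desired logarithmic bound on $\mathbb{E}[X_n]$.
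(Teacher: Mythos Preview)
Your proof is correct and takes a genuinely different route from the paper's. The paper deduces the phase transition from a general result (its Theorem~\ref{thm:phase}) proved by a shift argument: conditioning on the first step shows that $\lim_n \mathbb{E}(X_n^\alpha)/n$ is unchanged when the sequence of failure probabilities is shifted, and after $k$ shifts one can compare with i.i.d.\ coin flips at rate $1-\alpha(k)$ and let $k\to\infty$. You instead write $\mathbb{E}[X_n]=\sum_{k<n}\mathbb{E}[p_{X_k}]$ via the compensator and control the summands directly: for $q<1$ you use $X_k\to\infty$ a.s.\ plus bounded convergence, and for $q>1$ you use the Lyapunov function $q^{X_n}$ and Jensen. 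Your $q>1$ computation in fact recovers the bound $\mathbb{E}\bigl[(q^{X_n}-1)/(q-1)\bigr]\le n$ that the paper derives from its martingale $Y_n$ and then feeds into Markov's inequality and Borel--Cantelli to get the almost-sure logarithmic bound of Theorem~\ref{thm:q>1}; applying Jensen to it instead gives $\mathbb{E}[X_n]=O(\log n)$ directly, a refinement in expectation that the paper does not state. Incidentally, your $q<1$ mechanism (finite geometric sojourns $\Rightarrow X_k\to\infty$ $\Rightarrow p_{X_k}\to$ limit $\Rightarrow$ Ces\`aro) works verbatim for $q>1$ as well, since $p_i>0$ for every $i$; the exponential test function is not needed for the bare limit, though it does buy you the sharper rate.
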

We also prove direct bounds in the two different regimes:
\begin{theorem}
    For the Markov Chain $X_n$, there exists constants $C,D,N$ such that for $n\geq N$,
    \begin{align*}
        X_n & \leq C\ln (n), & q>1   \\
        X_n & \geq n - D,    & 0<q<1
    \end{align*}
\end{theorem}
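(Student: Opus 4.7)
The plan is to analyze both bounds through the waiting-time decomposition of the chain. Let $S_i$ be the sojourn time at state $i$, i.e., the number of steps $X_n$ spends at $i$ before jumping to $i+1$. By the Markov property, the $S_i$ are independent, each geometric on $\{1,2,\ldots\}$ with success probability $p_i = 1/(q^i+1)$ and mean $q^i+1$. Then the hitting time $\tau_k = \inf\{n : X_n = k\}$ satisfies $\tau_k = \sum_{i=0}^{k-1} S_i$ and $\{X_n \geq k\} = \{\tau_k \leq n\}$. Each bound in the theorem is then a statement about how $\tau_k$ compares with $n$.

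For the regime $q > 1$, I would observe that since the $S_i$ are independent and positive, $\tau_k \geq S_{k-1}$, so
\begin{equation*}
    \mathbb{P}(X_n \geq k) = \mathbb{P}(\tau_k \leq n) \leq \mathbb{P}(S_{k-1} \leq n) = 1 - (1 - p_{k-1})^n \leq np_{k-1} = \frac{n}{q^{k-1}+1},
\end{equation*}
using Bernoulli's inequality. Setting $k_n = \lceil C \ln n \rceil$ with $C > 2/\ln q$ makes the right-hand side bounded by a constant times $n^{-(C\ln q - 1)}$, which is summable. Borel--Cantelli then gives $X_n \leq C \ln n$ for all $n$ beyond some (almost surely finite) $N$.

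For the regime $0 < q < 1$, I would write $\tau_k = k + \sum_{i=0}^{k-1}(S_i - 1)$, where each $S_i - 1 \geq 0$ is geometric on $\{0,1,2,\ldots\}$ with $\mathbb{E}[S_i - 1] = q^i$. The total waste $W := \sum_{i=0}^{\infty}(S_i - 1)$ has $\mathbb{E}[W] = \sum_{i=0}^\infty q^i = 1/(1-q) < \infty$, hence $W$ is almost surely finite. Consequently $\tau_k \leq k + W$ for every $k$, so for $n \geq W$ we can take $k = n - \lceil W \rceil$ and conclude $X_n \geq n - D$ with $D = \lceil W \rceil$. A deterministic $D$ then follows by choosing $D$ so that $\mathbb{P}(W \leq D) \geq 1 - \varepsilon$ (or directly absorbing the random $W$ into $N$).

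The main obstacle is the $q > 1$ case: the sum $\tau_k$ is dominated by its largest summand $S_{k-1}$, but the tail bound has to be tight enough to give a summable series over $n$, which forces the constant $C$ in $C \ln n$ to be at least $2/\ln q$. The $q < 1$ case is nearly immediate once one notices the geometric-in-$i$ decay of $\mathbb{E}[S_i - 1]$, which is the combinatorial reason the chain essentially moves ballistically at rate one with only a bounded cumulative lag.
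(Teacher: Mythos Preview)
Your argument is correct and takes a genuinely different route from the paper. The paper builds both bounds out of the martingale
\[
Y_n = X_n - n + \frac{1-q^{X_n}}{1-q}.
\]
For $q>1$ it uses $\mathbb{E}[Y_n]=0$ to bound $\mathbb{E}\bigl[\tfrac{q^{X_n}-1}{q-1}\bigr]\le n$, applies Markov's inequality with $\lambda=n^{2+\delta}$, and then Borel--Cantelli. For $q<1$ it first invokes Azuma's inequality to force $X_n\to\infty$, then shows $Y_n$ is $L^2$-bounded and applies the martingale convergence theorem to get $X_n-n\to Y$.

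Your waiting-time decomposition sidesteps the martingale entirely. For $q>1$ you extract the tail of a single geometric $S_{k-1}$ and reach Borel--Cantelli with the same order of constant ($C>2/\ln q$ versus the paper's $(2+\delta)/\ln q$). For $q<1$ your argument is strictly more economical: the total waste $W=\sum_i(S_i-1)$ has finite mean, hence is a.s.\ finite, and $X_n\ge n-\lceil W\rceil$ holds for \emph{all} $n$, not merely eventually --- no Azuma step, no $L^2$ bound, no convergence theorem. The paper's martingale machinery, on the other hand, is what drives their generalization to $X_n^\alpha$ (their Theorem~\ref{thm:gen}), though your decomposition would adapt there too under the same summability hypothesis on $\alpha(i)/(1-\alpha(i))$. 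One small point: your closing remark about a deterministic $D$ via $\mathbb{P}(W\le D)\ge 1-\varepsilon$ would only give the bound with high probability; but since the paper itself allows the constants to be random (path-dependent, independent of $n$), taking $D=\lceil W\rceil$ already matches what is being claimed.
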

We generalize the above results to a larger class of growth processes that have similar transition probabilities.
The existence of such a phase transition is shown in Theorem \ref{thm:phase}. We also prove direct bounds on those models in Theorem \ref{thm:gen}.

The structure of the paper is as follows: we briefly review the work of \cite{DiaconisPangRam}, detailing how a Markov chain can be extracted from the Hopf square.
We will largely use their Kassel's \cite{QG} notation in our discussion of Hopf algebras, the latter of which is a good source for background on these objects.
Section \ref{sec:NewWork} contains our novel contribution:
we introduce the non-commutative and non-cocommutative Hopf algebra \(U_q(\mathfrak{sl}_2)\) that is the foundation of our work and identify the Markov chain that arises from the Hopf square.
We then examine the probability distribution and describe the phase transition that occurs as the deformation parameter $q$ is varied.
We conclude with some future work in Section \ref{sec:future} before giving the proofs of our results in Section \ref{sec:Proofs}.

\section{The Hopf Square Forming a Markov Chain}

A Hopf algebra $H$ is a vector space with a compatible algebra and coalgebra structure.
Combining the key operations of the multiplication and comultiplication, \(H\) has the linear map formed by the composition $\Psi^2:=\mu\circ\Delta$.
This is called the \textit{Hopf square}.
% and can be thought of as a process that sends elements of the Hopf algebra to others.
Explicitly, from Sweedler's notation for the coproduct (see \cite{QG} for details),
% in equation \ref{eq:Sweedler},
$$\Psi^2(x)=\sum_{(x)} x_{(1)}x_{(2)}$$ is a finite sum of products depending on \(x\).
A graded Hopf algebra is a direct sum $H= \bigoplus_n H_n$ where both the multiplication and comultiplication respect the grading.
Thus the Hopf square map respect the grading: if $x\in H_n$ then $\Psi^2(x)\in H_n$, too.
This allows us to examine each grading $H_n$ separately.

On any grading $H_n$ of a graded Hopf algebra $H$, take a basis $\mathfrak {B}_n=\{b_1,b_2,\ldots\}$ (we will only work with countable bases).
Then, for any basis element $b$, $\Psi^2(b)$ can be written as a unique linear combination of basis elements.
For certain Hopf algebras and certain choice of basis, the coefficients in the linear combination can be used to create the transition matrix of a Markov chain on the space of the basis elements-- this is summarized in the theorem below.

First, sum the coefficients of $\Psi^2(b)$ to get a normalization $N_b$.
Then, dividing by this normalization leads to the probability $p(b,b_i)$ of transition between $b$ and $b_i$.
Equivalently, the linear combination can be written as $\Psi^2(b)=\sum_i N_b p(b,b_i)b_i$.
A normalization like $N_b$ is often seen in Markov chains arising from graphs on non-regular graphs.
Of course, this representation is heavily dependent on the choice of basis.

A key result in the work of \cite{DiaconisPangRam} shows that in specific, familiar Hopf algebras, a specific choice of basis leads to all of the coefficients being non-negative and each $N_b$ being equal.
\begin{theorem}[Diaconis, Pang, Ram]\label{thm:Diac}
    Let $H=\bigoplus H_n$ be a graded Hopf algebra over $\mathbb R$ which, as an algebra, is either a polynomial algebra or is a cocommutative free associative algebra.

    Then there exists a basis $\mathfrak {B}_n$ of $H_n$ such that the matrix representation of the normalized Hopf square $\frac{1}{2^n}\Psi^2$ in that basis, transposed, is a transition matrix.
    That is, $T_n$ is a transition matrix for a Markov chain on $\mathfrak {B}_n$ as defined by
    \begin{equation*}
        \frac{1}{2^n}\Psi^2(b) = \sum_{b'\in \mathfrak {B}_n} T_n(b,b')b'
    \end{equation*}
\end{theorem}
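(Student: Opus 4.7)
The plan is to exhibit, in each of the two cases, an explicit basis $\mathfrak{B}_n$ of $H_n$ in which $\frac{1}{2^n}\Psi^2$ is manifestly stochastic (after transposition). Two properties must be checked: every coefficient of $\Psi^2(b)$ in $\mathfrak{B}_n$ is nonnegative, and the coefficients sum to exactly $2^n$ for every $b \in \mathfrak{B}_n$. Once these are in place the theorem follows by direct inspection.

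For the normalization, I would introduce a unital algebra character $\chi : H \to \mathbb{R}$ with $\chi(b)=1$ on each basis element. Since $\Delta$ is an algebra map, one has $\chi \circ \Psi^2 = (\chi \otimes \chi) \circ \Delta$, so the row sum of the matrix of $\Psi^2$ at $b$ equals $(\chi \otimes \chi)\Delta(b)$. For a graded connected Hopf algebra with primitive algebra generators in degree one, the binomial expansion $\Delta(x^n) = \sum_k \binom{n}{k} x^k \otimes x^{n-k}$ gives $(\chi \otimes \chi)\Delta = 2^n$ on each degree-$n$ element, producing the required $\frac{1}{2^n}$ normalization.

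For nonnegativity I would handle each case separately. In the polynomial-algebra case, a Milnor--Moore-type structure theorem (valid for connected graded commutative Hopf algebras over a field of characteristic zero) lets us choose primitive algebra generators $x_1, x_2, \ldots$; let $\mathfrak{B}_n$ consist of monomials of total degree $n$. Multiplicativity of $\Delta$, commutativity of $\mu$, and the binomial formula applied to each primitive then express $\Psi^2(b)$ as a positive-integer combination of monomials. In the cocommutative free associative case, the Cartier--Milnor--Moore theorem identifies $H \cong U(\mathfrak{g})$ with $\mathfrak{g} = \mathrm{Prim}(H)$, and a Poincar\'e--Birkhoff--Witt basis adapted to an ordered basis of $\mathfrak{g}$ suffices: $\Delta$ on a PBW monomial acts by shuffles of tensors of primitives, and re-expansion of the resulting products in PBW basis produces nonnegative integer coefficients counting the underlying shuffle-and-merge operations.

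The main obstacle will be choosing the basis so that positivity is transparent and compatible with the grading. For the polynomial case, securing primitive algebra generators is the essential structural input; when they are not immediately available one may need a change of variables (for instance via the Eulerian idempotent) to produce them. For the cocommutative case, the subtle point is picking a linear order on a basis of $\mathfrak{g}$ so that iterated multiplications in PBW basis preserve positivity; cocommutativity guarantees this abstractly by forcing $\Delta$ to decompose tensorially on primitives. Once these choices are fixed and the normalization is in place, the stochastic property of $T_n$ follows from the combinatorial interpretation of $\Psi^2 = \mu \circ \Delta$ as ``split, then merge.''
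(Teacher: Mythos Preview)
The paper does not contain its own proof of this theorem: it is stated as a result of Diaconis, Pang, and Ram and cited to \cite{DiaconisPangRam}, with no argument given in the present paper. So there is nothing to compare your proposal against here.

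On the substance of your sketch, the overall shape is reasonable, but two steps are not yet justified. First, your normalization argument needs all algebra generators to sit in degree~$1$: the identity $(\chi\otimes\chi)\Delta(b)=2^n$ for $b\in H_n$ follows from $\Delta(x)=1\otimes x+x\otimes1$ only when the monomial $b$ is a product of exactly $n$ primitives, which fails if any generator has higher degree. The theorem as stated here does not impose that hypothesis, so you should either add it or explain why it can be arranged. Second, in the polynomial case you invoke Milnor--Moore to obtain primitive algebra generators, but Milnor--Moore (and Cartier--Milnor--Moore) is a structure theorem for \emph{cocommutative} connected Hopf algebras, identifying them with $U(\mathfrak g)$; it does not, by itself, furnish primitive polynomial generators in a merely commutative Hopf algebra. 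You would need a separate argument (e.g.\ via the Eulerian idempotent you mention, or a Leray-type splitting) to produce such generators, and you should say why the resulting monomial basis actually has nonnegative $\Psi^2$-coefficients rather than asserting it. Similarly, in the cocommutative free case, ``re-expansion in PBW basis produces nonnegative integer coefficients'' is the crux and is not obvious in general; the basis used in \cite{DiaconisPangRam} for the free associative algebra is the word basis, not a PBW basis of $U(\mathfrak g)$, and positivity there comes from an explicit shuffle description of $\Psi^2$ rather than from PBW straightening.
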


It turns out that applying this theorem to a few common Hopf algebras gives common Markov chains: the free associative algebra gives rise to a deck shuffling chain, and the symmetric polynomial Hopf algebra gives rise to the rock breaking chain.
More results in their work allow for the structure of the Hopf algebras to be used to analyze the chains.

Unfortunately, the assumptions of this theorem eliminate the Hopf algebras that we would like to study: quantum groups that are both non-commutative and non-cocommutative.

\section{\label{sec:NewWork}The Hopf Square on a Quantum Group}

\subsection{The Hopf algebra}

The non-commutative and non-cocommutative Hopf Algebra we study is a quantum deformation of $\mathfrak{sl}(2)$.
For $q\in \mathbb{R}$, define $U_{q}$ as a free algebra with generators $E,F,K,K^{-1}$, subject to the following multiplication relations (where \(xy:=\mu(x,y)\))
\begin{alignat*}{1}
    KK^{-1}=                       & K^{-1}K=1         \\
    KE=q^{2}EK \quad               & \quad KF=q^{-2}FK \\
    (q-q^{-1})\left(EF - FE\right) & = (K-K^{-1})
\end{alignat*}
% The last fraction is defined since $q\neq q^{-1}$.
The set $\left\{ E^{i}F^{j}K^{l} | i,j\in\mathbb{N},l\in\mathbb{Z}\right\}$
can be seen to be a basis.
The parameter $q\neq 1$ in these relations makes the algebra non-commutative.
The comultiplication, counit, and antipode are defined on the basis elements as
\begin{alignat*}{3}
    \Delta(E) & =1\otimes E+E\otimes K & \Delta(F)      & =K^{-1}\otimes F+F\otimes1 & \epsilon(E) & =\epsilon(F)=0      \\
    \Delta(K) & =K\otimes K            & \Delta(K^{-1}) & =K^{-1}\otimes K^{-1}      & \epsilon(K) & =\epsilon(K^{-1})=1 \\
    S(E)      & =-EK^{-1}              & S(F)           & =-KF                       & S(K)=K^{-1} & , S(K^{-1})=K
\end{alignat*}
It can be checked that these satisfy all of the axioms of a Hopf algebra.
Importantly, the comultiplication is non-cocommutative.
If $q\to 1$, then one can show $U_{1}$ is isomorphic to the quotient $U(\mathfrak{sl}(2))[K]/(K^2-1)$ of the universal enveloping algebra.

Notice that the set generated by $E$ and $K$ is independent of $F$ or $K^{-1}$.
There is a sub-Hopf algebra generated by $E$ and $K$ with the relations above,
with basis $\left\{ E^{i}K^{l}| i\in\mathbb{N},l\in\mathbb{N}\right\}$.
We denote this sub-Hopf algebra $H\subset U_q$.
Notice the product and coproduct respect the power of $E$, so $H$ graded by the power of $E$.
Each grading $H_i=\left\langle E^{i}K^{l}|l\in\mathbb{N}\right\rangle$ is infinite dimensional.

For any basis element $E^i K^l$ we can use the commutation relations to give an explicit formula for the comultiplication:
\begin{equation}\label{eq:EKcom}
    \Delta(E^{i}K^{l})
    =\sum_{r=0}^{i}q^{r(i-r)}
    \left[\begin{smallmatrix}i\\r\end{smallmatrix}\right]
    E^{i-r}K^{l}\otimes E^{r}K^{l+(i-r)}
\end{equation}
The $q$-binomial coefficients $\left[\begin{smallmatrix}i\\r\end{smallmatrix}\right]$ are defined when $q$ is not a root of unity in terms of
the $q$-numbers $[n]=\frac{q^{n}-q^{-n}}{q-q^{-1}}=q^{n-1}+q^{n-3}+\cdots+q^{-n+3}+q^{-n+1}$.

\subsection{Main Results}
With a formula for the comultiplication in equation \ref{eq:EKcom} and the commutation relations, we can find a formula for the Hopf square
\begin{equation}
    \Psi^2(E^{i}K^{l})
    =\sum_{r=0}^{i}q^{r(i-r+2l)}
    \left[\begin{smallmatrix}i\\r\end{smallmatrix}\right]
    E^{i}K^{2l+i-r}
\end{equation}
For a fixed grading of the Hopf algebra \(H_i\), the basis is $\{E^i,E^iK,E^iK^2,\ldots\}$.
Each element of the space is determined by the power of $K\in \{0,1,\ldots\}$. This will be the state space of a Markov chain \(X_n\) with $E^i K^0$ the initial state, $X_0=0$.

The $i=0$ grading has a trivial Markov chain because \(\Psi^2(K^{l})=K^{2l}\),
so the transition probability is $T(l,j)=\delta({2l,j})$:
the state $l$ moves to the state $2l$ with probability $1$.
This deterministic doubling of our state occurs in each grading, which we will often remove from our analysis.

In the $i=1$ grading, we have two possible transitions,
\(
\Psi^2\left(EK^{l}\right)
=EK^{2l+1}+q^{2l}EK^{2l}
\).
We  break up the Hopf square map into a composition:
\(
EK^{l}\overset{D}{\mapsto} EK^{2l} \overset{\Phi_1}{\mapsto} EK^{2l+1} + q^{2l} EK^{2l}
\)
To focus in on the randomness in the process, we examine the process
\begin{equation}
    \Phi_1 (EK^l) = EK^{l+1} + q^l EK^l
\end{equation}

Since Theorem \ref{thm:Diac} of \cite{DiaconisPangRam} does not apply, the normalization factor $N_{EK^l}$ will be different for each basis element.
However, for each \(l\), $N_{EK^l}=1+q^l$, giving transition probabilities of equation \ref{eq:probs}.
To ensure these are non-trivial probabilities, we fix $q> 0$, and we note this is a Markov process as the probabilities only depend on the current state.

Let $\{X_n\}_{n=0}^\infty$ be a Markov chain with initial state \(0\) and transition matrix defined as above and equation \ref{eq:probs}.
\begin{theorem}\label{thm:Dist}
    For the Markov chain $X_n$ and $k\in \{0,1,2,\ldots\}$,
    \begin{equation}\label{eq:Dist}
        \mathbb P(X_n = k) = \frac{1}{\left(-1;q\right)_{k}}\left[\sum_{(y_{0}+\cdots+y_{k}=n-k)}\prod_{i=0}^{k}\left(\frac{q^{i}}{q^{i}+1}\right)^{y_{i}}\right]
    \end{equation}
    Where $(a;q)_n:=(1-a)(1-aq)\cdots(1-aq^{n-1})$ is the $q$-Pochhammer symbol.
\end{theorem}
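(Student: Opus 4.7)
The plan is to prove the formula by a direct path-counting argument, exploiting the fact that the chain is monotone nondecreasing with only two possible moves from each state.

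First I would observe that because $T(i,j)=0$ unless $j\in\{i,i+1\}$, any trajectory with $X_0=0$ and $X_n=k$ consists of exactly $k$ up-moves (from $i$ to $i+1$ for $i=0,1,\ldots,k-1$) interspersed with $n-k$ stay-moves. Such a trajectory is completely determined by the nonnegative integers $y_i$ $(i=0,\ldots,k)$, where $y_i$ denotes the number of consecutive stays at state $i$ (before leaving, or before time $n$ in the case $i=k$). These satisfy $y_0+y_1+\cdots+y_k=n-k$, so that paths from $0$ to $k$ of length $n$ are in bijection with weak compositions of $n-k$ into $k+1$ parts.

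Next I would compute the probability of a single such trajectory by multiplying the transition probabilities encountered along the way. The $y_i$ stays at state $i$ contribute $\left(q^i/(q^i+1)\right)^{y_i}$, and the single up-move from $i$ to $i+1$ (occurring for $i=0,\ldots,k-1$) contributes $1/(q^i+1)$. Thus each path has probability
\begin{equation*}
\left(\prod_{i=0}^{k-1}\frac{1}{q^i+1}\right)\prod_{i=0}^{k}\left(\frac{q^i}{q^i+1}\right)^{y_i}.
\end{equation*}

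Then I would identify the path-independent prefactor with $1/(-1;q)_k$. Using the defining formula $(a;q)_n=(1-a)(1-aq)\cdots(1-aq^{n-1})$ with $a=-1$ gives $(-1;q)_k=\prod_{i=0}^{k-1}(1+q^i)$, so $\prod_{i=0}^{k-1}(q^i+1)^{-1}=1/(-1;q)_k$. Summing the per-path probability over all weak compositions $y_0+\cdots+y_k=n-k$ then yields the claimed formula.

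There is no real obstacle here: the argument is a routine bookkeeping exercise once one notes the monotone structure of the chain and matches the Pochhammer symbol. The only point worth checking carefully is the index range in $(-1;q)_k$ versus the product of up-move probabilities (the former goes up to $i=k-1$, as does the latter), and that the exponent $y_k$ at the terminal state is correctly included with weight $q^k/(q^k+1)$ rather than folded into the normalization.
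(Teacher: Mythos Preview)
Your proposal is correct and follows essentially the same approach as the paper: a direct path-counting argument that separates the $k$ forced up-moves (giving the $q$-Pochhammer prefactor) from the $n-k$ stays distributed over states $0,\ldots,k$ via a weak composition $y_0+\cdots+y_k=n-k$. If anything, your write-up is slightly more careful about the index ranges and the identification $(-1;q)_k=\prod_{i=0}^{k-1}(1+q^i)$ than the paper's own proof.
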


The result does not give a closed form to study the distribution.
However, we know the expected number of steps to reach state $N$, the hitting time of this chain.

\begin{theorem}\label{thm:Hit}
    For the Markov chain $X_n$ and $N\in \{0,1,2,\ldots\}$, the hitting time of state $N$,
    \begin{equation}\label{eq:Hit}
        \mathbb E [\min \{n\geq 0 \mid X_n = N\}] = N+\frac{q^{N}-1}{q-1}
    \end{equation}
\end{theorem}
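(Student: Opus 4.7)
The plan is to exploit the simplicity of the chain's dynamics: from any state $i$, the only allowed transitions are to stay at $i$ or to move to $i+1$. Consequently the process is monotone non-decreasing, and the hitting time of $N$ starting from $0$ decomposes as a sum of holding times at states $0,1,\ldots,N-1$. Each of these holding times is independent of the others by the strong Markov property, which reduces the computation to evaluating expectations of geometric random variables.

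Concretely, I would let $\tau_i$ denote the number of steps spent at state $i$ before the first jump to state $i+1$. Since at each step the chain moves up with probability $p_i := 1/(q^i+1)$ and stays with probability $1-p_i$, $\tau_i$ is geometrically distributed with success probability $p_i$, so $\mathbb E[\tau_i] = 1/p_i = q^i + 1$. Then I would write
\begin{equation*}
    \min\{n\geq 0 \mid X_n = N\} = \sum_{i=0}^{N-1} \tau_i
\end{equation*}
and apply linearity of expectation to obtain
\begin{equation*}
    \mathbb E\!\left[\min\{n\geq 0 \mid X_n = N\}\right] = \sum_{i=0}^{N-1}(q^i + 1) = N + \frac{q^N - 1}{q-1},
\end{equation*}
where the geometric sum is valid for $q\neq 1$ (the case $q=1$ follows by continuity or direct calculation giving $2N$).

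There is no real obstacle here, since the monotone one-step structure makes the decomposition into independent geometric holding times immediate from the strong Markov property. If one preferred a self-contained derivation, an alternative plan is first-step analysis: letting $h_k$ denote the expected hitting time of $N$ from $k$, the relation $h_k = 1 + \frac{q^k}{q^k+1}h_k + \frac{1}{q^k+1}h_{k+1}$ simplifies to $h_{k+1} - h_k = -(q^k+1)$, and telescoping from $k=N$ (where $h_N=0$) down to $k=0$ recovers the same closed form. Either route yields the result with only elementary manipulations.
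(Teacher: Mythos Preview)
Your proof is correct, but it takes a different route from the paper's. The paper proceeds via the general theory of absorbing Markov chains: it makes state $N$ absorbing, writes the $(N+1)\times(N+1)$ transition matrix in canonical form $P=\begin{pmatrix}Q & R\\ 0 & I_1\end{pmatrix}$, computes the fundamental matrix $W=(I_N-Q)^{-1}$ explicitly, and reads off the expected absorption time from state $0$ as the sum of the first row of $W$, namely $\sum_{i=0}^{N-1}(1+q^i)$. Your argument instead exploits the one-directional structure directly, decomposing the hitting time as a sum of independent geometric holding times and applying linearity of expectation. Your approach is more elementary and transparent for this particular chain, bypassing any matrix inversion; the paper's approach, while heavier here, has the advantage of being a uniform template that would still apply if the chain allowed backward transitions. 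Your first-step-analysis alternative is essentially the scalar recursion underlying the fundamental-matrix computation, so the two arguments are closely related at that level.
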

When $q\gg 1$, \(T(l,l)\to 1\), so we expect slow growth and a large hitting time, and when $q\to0$, we expect the opposite.
The Markov chain clearly depends heavily on the parameter $q$.
\begin{enumerate}
    \item When $q=1$, $T(l,l)=T(l,l+1)=\frac{1}{2}$, and the distribution is binomial:
          \(
          \mathbb P(X_n = k) =\binom{n}{k} \frac{1}{2^{n}}
          \)
          which means that the expectation $\mathbb{E} (X_n) = \frac{n}{2}$.

    \item When $q=0$, there is a fair coin flip at position $0$, but once \(X_n\geq 1\), the probability of success is always $1$.
          Thus, $\mathbb{E} (X_n)\approx n$.

    \item As $q\to \infty$, the fair coin flip remains at position $0$, but afterwards, $T(l,l+1)\to 0$.
          As a heuristic, taking $T(l,l+1)=0$ gives a finite chain on two states, and therefore $\mathbb{E} (X_n) = 1- (\frac{1}{2})^n$.
\end{enumerate}
This suggests that a phase transition occurs at $q=1$: when $q>1$, the expectation is sub-linear, while when $0\leq q<1$, the expectation is linear.

The above argument only uses characteristics of the probabilities.
This motivates a more general class of Markov chains, of which ours is a special case.

Let $X_{n}^{\alpha}$ be a Markov chain $X_{0}^{\alpha},X_{1}^{\alpha}\ldots$ on $\{0,1,2,\ldots\}$, $X_0^{\alpha}=0$. The transition matrix is given by ``failure" $T(i,i)=\alpha(i)$ and ``success" $T(i,i+1)=1-\alpha(i)$.
The equivalent of equation \ref{eq:Dist} of Theorem \ref{thm:Dist}, is
\begin{equation*}
    \mathbb P(X_n^\alpha = k) = \left[ \prod_{i=0}^{k-1}\left(1-\alpha(i)\right)^{y_{i}}\right]\left[\sum_{(y_{0}+\cdots+y_{k}=n-k)}\prod_{i=0}^{k}\left(\alpha(i)\right)^{y_{i}}\right]
\end{equation*}
And the hitting time, equation \ref{eq:Hit} of Theorem \ref{thm:Hit}, becomes \(\sum_{i=0}^{N-1} \frac{1}{1-\alpha(i)}\)

We can characterize the phase transition for these general models, with some moderate assumptions on the $\alpha(i)$, formalizing the previous intuition.
\begin{theorem}\label{thm:phase}
    For the Markov chain $X_n^{\alpha}$,
    if for $0\leq q <1$, the $0<\alpha(i)<1$ are strictly decreasing to $0$ as $i\to \infty$, for $q>1$, strictly increasing to $1$, and when $q=1$, identically $\frac{1}{2}$, then the Markov chain exhibits a phase transition in expectation:
    \begin{equation}
        \lim_{n\rightarrow\infty}\frac{\mathbb{E}(X^{\alpha}_{n})}{n}=
        \begin{cases}
            1           & 0<q<1 \\
            \frac{1}{2} & q=1   \\
            0           & q>1
        \end{cases}
    \end{equation}
\end{theorem}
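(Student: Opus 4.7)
The plan is to reduce the asymptotics of $\mathbb{E}(X_n^\alpha)$ to those of $\alpha(X_k^\alpha)$ via a Doob-type decomposition, and then to exploit the fact that $X_k^\alpha \to \infty$ almost surely. Since each step increases $X^\alpha$ by one with conditional probability $1-\alpha(X_k^\alpha)$ and otherwise leaves it fixed, the one-step drift is $\mathbb{E}(X_{k+1}^\alpha - X_k^\alpha \mid X_k^\alpha) = 1-\alpha(X_k^\alpha)$. Consequently $A_n := \sum_{k=0}^{n-1}(1-\alpha(X_k^\alpha))$ is the predictable compensator and $M_n := X_n^\alpha - A_n$ is a mean-zero martingale with $M_0 = 0$. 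Taking expectations gives
\begin{equation*}
    \frac{\mathbb{E}(X_n^\alpha)}{n} = 1 - \frac{1}{n}\sum_{k=0}^{n-1}\mathbb{E}(\alpha(X_k^\alpha)),
\end{equation*}
so the theorem reduces to showing that $\mathbb{E}(\alpha(X_k^\alpha)) \to L$, where $L := \lim_{i\to\infty}\alpha(i) \in \{0, \frac{1}{2}, 1\}$; a Cesaro argument then upgrades this to convergence of the average.

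For the convergence of $\mathbb{E}(\alpha(X_k^\alpha))$, I would verify that $X_k^\alpha \to \infty$ almost surely. Using the hitting time formula $\mathbb{E}(T_N) = \sum_{i=0}^{N-1}\frac{1}{1-\alpha(i)}$ recorded just before the theorem, each $T_N$ has finite expectation (a finite sum of finite terms, since every $\alpha(i) < 1$) and is therefore a.s.\ finite; hence $X_k^\alpha$ eventually exceeds every fixed threshold. Because $\alpha(X_k^\alpha) \to L$ a.s.\ and $\alpha$ is bounded in $[0,1]$, bounded convergence gives $\mathbb{E}(\alpha(X_k^\alpha)) \to L$. Cesaro applied to the displayed identity then yields $\mathbb{E}(X_n^\alpha)/n \to 1 - L$, and plugging in $L = 0, \frac{1}{2}, 1$ in the three regimes $0 < q < 1$, $q = 1$, $q > 1$ recovers the stated limits.

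No step here is genuinely difficult. The only real verification is the transience claim $X_k^\alpha \to \infty$ a.s., which is immediate from the explicit formula for $\mathbb{E}(T_N)$ already supplied in the paper. It is worth noting that the monotonicity hypothesis on $\alpha(i)$ enters only through the existence of the limit $L$; the proof would go through under the weaker assumption that $\alpha(i)$ converges to one of $0, \frac{1}{2}, 1$, since neither the martingale identity nor the bounded convergence step depends on any rate of convergence.
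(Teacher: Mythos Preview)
Your proof is correct and takes a genuinely different route from the paper's. The paper argues by conditioning on the first step: writing $\Sigma\alpha=\{\alpha(1),\alpha(2),\ldots\}$, one has $\mathbb{E}(X_n^\alpha)=(1-\alpha(0))\mathbb{E}(X_{n-1}^{\Sigma\alpha})+\alpha(0)\mathbb{E}(X_{n-1}^\alpha)$, from which the paper deduces $\lim_n \mathbb{E}(X_n^\alpha)/n=\lim_n \mathbb{E}(X_n^{\Sigma^k\alpha})/n$ for every $k$; monotonicity of $\alpha$ then lets one dominate $X_n^{\Sigma^k\alpha}$ by (or with) a binomial of success probability $1-\alpha(k)$, and sending $k\to\infty$ finishes. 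Your approach instead isolates the exact identity $\mathbb{E}(X_n^\alpha)/n=1-\tfrac{1}{n}\sum_{k<n}\mathbb{E}(\alpha(X_k^\alpha))$ via the Doob compensator, proves $X_k^\alpha\to\infty$ a.s.\ from the finite hitting-time formula, and applies bounded convergence plus Ces\`aro averaging. Two things your argument buys: it establishes the existence of the limit directly rather than tacitly assuming it (the paper's passage from the one-step recursion to equality of limits presupposes that all the limits in question exist), and, as you observe, it only uses that $\alpha(i)$ converges, not monotonicity. The paper's approach, on the other hand, gives the explicit sandwich $1-\alpha(k)\le \mathbb{E}(X_n^{\Sigma^k\alpha})/n$ (respectively $\le 1-\alpha(k)$) for every $n$, which is a slightly more quantitative intermediate statement. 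Your argument is also more in keeping with the martingale methods the paper uses for Theorems~\ref{thm:q>1} and~\ref{thm:q<1}.
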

\begin{corollary} \textbf{(Theorem \ref{thm:introTheorem})}
    The Markov chain $X_n$ undergoes a phase transition at $q=1$.
\end{corollary}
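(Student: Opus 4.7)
The plan is to deduce this corollary directly from Theorem \ref{thm:phase} by identifying the specific $\alpha$ associated with $X_n$ and verifying that it satisfies the hypotheses of that more general result. From \eqref{eq:probs}, the failure probability at state $i$ is $\alpha(i) = q^{i}/(q^{i}+1)$, so the task reduces to checking monotonicity and limits of this particular sequence in each of the three regimes for $q$.

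The key observation is that the map $f(x) = x/(x+1)$ is strictly increasing on $[0,\infty)$ with $f(0)=0$ and $f(x) \to 1$ as $x \to \infty$. Composing with the sequence $q^{i}$ then gives the following: for $0<q<1$, $q^{i}$ strictly decreases to $0$, and hence $\alpha(i) = f(q^{i})$ strictly decreases to $0$; for $q>1$, $q^{i}$ strictly increases to $\infty$, and hence $\alpha(i)$ strictly increases to $1$; and for $q=1$, $\alpha(i) \equiv 1/2$. These match the three conditions on $\alpha$ imposed in Theorem \ref{thm:phase}, so its conclusion transfers verbatim to $X_n$ and yields the claimed three-regime limit for $\mathbb{E}(X_n)/n$ stated in Theorem \ref{thm:introTheorem}.

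Since the analytic heavy lifting is contained in Theorem \ref{thm:phase}, there is no substantive obstacle in the corollary itself; the proof is essentially a verification of hypotheses. The only point worth flagging is that the critical value $q=1$ must be treated as its own case rather than as a limit from either side, because the theorem's conclusions for $0<q<1$ and $q>1$ rely on the strict monotonicity of $\alpha$, which degenerates at $q=1$ where $\alpha$ is constant. Any more refined statement (such as the explicit $\log n$ or $n-D$ bounds from the second theorem in the introduction) would require separate martingale arguments tailored to each regime, but for the phase-transition statement as given, the corollary is immediate.
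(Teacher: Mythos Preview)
Your proof is correct and follows essentially the same approach as the paper: identify $\alpha(i)=q^{i}/(q^{i}+1)$ and verify the monotonicity and limit hypotheses of Theorem~\ref{thm:phase}. The paper checks monotonicity via the derivative $\frac{q^{i}\ln(q)}{(q^{i}+1)^{2}}$, whereas you do it by composing the monotone map $x\mapsto x/(x+1)$ with $q^{i}$, but this is a cosmetic difference.
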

\begin{proof}
    Here, $\alpha(i)=\frac{q^{i}}{1+q^{i}}$.
    The derivative with respect to $i$ is $\frac{q^i\ln(q)}{(q^i+1)^2}$.
    Thus, $\alpha(i)$ is strictly decreasing for $q<1$ and increasing when $q>1$.
    Also see that as $i\to \infty$, when $q<1$, $\alpha(i)\to 0$ and when $q>1$, $\alpha(i)\to 1$
\end{proof}

To get better bounds on the chains $X_n^{\alpha}$ and $X_n$, we use martingale theory (see, for example, \cite{durrett}).
A function $f$ depending on $X_n$ and $n$ must satisfy the martingale condition
\begin{equation*}
    f(x,n)\alpha(x) + f(x+1,n)(1-\alpha(x)) = f(x,n-1)
\end{equation*}
Simplifying, we end up with a difference relation
\begin{equation*}
    \Delta_n(f)|_{n-1}  = -(1-\alpha(x))\Delta_x(f)|_{x}
\end{equation*}
This allows us to separate variables and guess a solution $f(x,n)=h(x) + g(n)$, where $g(n)-g(n-1)=-\lambda$ and $h(x+1)-h(x)=\lambda \frac{1}{1-\alpha(x)}$. Then
\[
    h(x)=\lambda \sum_{i=0}^{x-1} \frac{1}{1-\alpha(i)} =\lambda x  + \lambda \sum_{i=0}^{x-1} \frac{\alpha(i)}{1-\alpha(i)}
\]
Taking the two sums and setting an initial condition $\lambda = 1$, the above work gives motivation for the following theorem.
\begin{theorem}\label{thm:Mart}
    For the Markov chain $X_n^{\alpha}$, there is a martingale $\{Y_n^{\alpha}\}_{n\geq0}$ defined by
    \begin{equation}\label{eq:Y_n^alpha}
        Y_n^{\alpha} := X_n^{\alpha} - n + \sum_{i=0}^{X_n^{\alpha}-1} \frac{\alpha(i)}{(1-\alpha(i))}
    \end{equation}
\end{theorem}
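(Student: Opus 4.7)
The plan is to verify the martingale condition directly by conditioning on $X_n^{\alpha}$. Since $X_n^\alpha$ is a Markov chain taking values in $\{0,1,2,\ldots\}$ with $X_n^\alpha \leq n$, the sum $\sum_{i=0}^{X_n^\alpha - 1} \frac{\alpha(i)}{1-\alpha(i)}$ has at most $n$ terms almost surely, so $Y_n^\alpha$ is bounded (hence integrable) and adapted to the natural filtration $\mathcal{F}_n=\sigma(X_0^\alpha,\ldots,X_n^\alpha)$. It remains to show that $\mathbb{E}[Y_{n+1}^\alpha \mid \mathcal{F}_n] = Y_n^\alpha$.

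By the Markov property, I condition on the event $\{X_n^\alpha = x\}$ and use that $X_{n+1}^\alpha = x$ with probability $\alpha(x)$ and $X_{n+1}^\alpha = x+1$ with probability $1-\alpha(x)$. On the failure event, $Y_{n+1}^\alpha = x - (n+1) + \sum_{i=0}^{x-1} \frac{\alpha(i)}{1-\alpha(i)}$, while on the success event, $Y_{n+1}^\alpha = (x+1) - (n+1) + \sum_{i=0}^{x} \frac{\alpha(i)}{1-\alpha(i)}$. The difference between the two values is
\begin{equation*}
    1 + \frac{\alpha(x)}{1-\alpha(x)} = \frac{1}{1-\alpha(x)},
\end{equation*}
which is precisely the reciprocal of the success probability. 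This is exactly the telescoping identity that the separation-of-variables computation preceding the theorem was designed to produce.

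Taking the conditional expectation, the failure term is the baseline value and the success term contributes an extra $(1-\alpha(x)) \cdot \frac{1}{1-\alpha(x)} = 1$, giving
\begin{equation*}
    \mathbb{E}[Y_{n+1}^\alpha \mid X_n^\alpha = x] = x - (n+1) + \sum_{i=0}^{x-1} \frac{\alpha(i)}{1-\alpha(i)} + 1 = Y_n^\alpha\bigl|_{X_n^\alpha = x},
\end{equation*}
which is the desired martingale property. There is essentially no obstacle here: the whole point is that the specific form of the summand $\frac{\alpha(i)}{1-\alpha(i)}$ was reverse-engineered to cancel the drift $\mathbb{E}[X_{n+1}^\alpha - X_n^\alpha \mid \mathcal{F}_n] = 1-\alpha(X_n^\alpha)$ against the deterministic $-n$ correction. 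The only minor subtlety worth flagging is that $\alpha(i)\to 1$ is permitted in the $q>1$ regime, so the individual summands can be large; but since they appear only up to index $X_n^\alpha - 1 < n$, each $Y_n^\alpha$ remains a.s. bounded and the conditional expectation is well defined.
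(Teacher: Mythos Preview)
Your proof is correct and follows essentially the same approach as the paper: a direct verification of the martingale identity by conditioning on the current state and averaging over the two possible transitions. Your write-up is in fact slightly more careful, since you explicitly check integrability and adaptedness, whereas the paper's proof jumps straight to the conditional-expectation computation.
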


\begin{corollary}
    For the Markov chain $X_n$, there is a martingale $\{Y_n\}_{n\geq0}$ defined by
    \begin{equation}\label{eq:Y_n}
        Y_n := X_n - n + \frac{1-q^{X_n}}{1-q}
    \end{equation}
\end{corollary}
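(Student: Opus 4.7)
The plan is to invoke Theorem \ref{thm:Mart} directly, after observing that $X_n$ is exactly the chain $X_n^{\alpha}$ for the specific choice $\alpha(i) = \frac{q^i}{1+q^i}$ coming from the transition probabilities in equation \ref{eq:probs}. With this identification, the only work left is to evaluate the sum $\sum_{i=0}^{X_n-1} \frac{\alpha(i)}{1-\alpha(i)}$ appearing in the definition \ref{eq:Y_n^alpha} of $Y_n^{\alpha}$.

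First I would compute $1 - \alpha(i) = \frac{1}{1+q^i}$, so that $\frac{\alpha(i)}{1-\alpha(i)} = q^i$. Then the sum collapses to a finite geometric series,
\[
\sum_{i=0}^{X_n - 1} \frac{\alpha(i)}{1-\alpha(i)} \;=\; \sum_{i=0}^{X_n - 1} q^i \;=\; \frac{1 - q^{X_n}}{1-q},
\]
valid for $q \neq 1$. Substituting this back into equation \ref{eq:Y_n^alpha} yields precisely the formula in equation \ref{eq:Y_n}, and the martingale property of $Y_n$ is inherited from that of $Y_n^{\alpha}$.

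The one thing to be careful about is the case $q = 1$, where the fraction $\frac{1 - q^{X_n}}{1-q}$ is a removable singularity. In that regime $\alpha(i) \equiv \frac{1}{2}$, so $\frac{\alpha(i)}{1-\alpha(i)} = 1$ and the sum is simply $X_n$, which is also the limit $\lim_{q \to 1} \frac{1 - q^{X_n}}{1-q} = X_n$. So the formula is consistent across all $q > 0$ once interpreted by continuity, and no genuine obstruction arises. The main step is just the geometric sum; there is no real obstacle, which is why this is stated as a corollary rather than a theorem.
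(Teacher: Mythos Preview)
Your proposal is correct and matches the paper's proof essentially line for line: specialize $\alpha(i)=\frac{q^i}{1+q^i}$ in Theorem~\ref{thm:Mart}, note that $\frac{\alpha(i)}{1-\alpha(i)}=q^i$, and sum the geometric series to obtain $\frac{1-q^{X_n}}{1-q}$. Your additional remark on the $q=1$ case is a nice observation but not strictly needed here, since the ambient Hopf algebra $U_q$ already requires $q\neq 1$.
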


\begin{proof}
    When $\alpha(i)=\frac{q^i}{1+q^i}$, the sum in equation \ref{eq:Y_n^alpha} becomes
    \(
    \sum_{i=0}^{X_n-1} {q^i} = \frac{1-q^{X_n}}{1-q}
    \)
\end{proof}

We can use the martingale of equation \ref{eq:Y_n} to characterize directly the nature of the phase transition of the Markov chain as the parameter $q$ from the quantum group is varied.
The second requires more powerful theorems but gives a stronger result.

\begin{theorem}\label{thm:q>1}
    For the Markov chain $X_n$, when $q>1$, there exists an $N$ such that for $n\geq N$,
    \[X_n\leq C \ln (n)\]
    almost surely, for some constant $C$ independent of $n$.
\end{theorem}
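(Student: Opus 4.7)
The plan is to exploit the martingale $Y_n$ of equation \ref{eq:Y_n} to obtain a sharp moment estimate on $q^{X_n}$, and then upgrade it to an almost-sure bound via Markov's inequality together with the Borel--Cantelli lemma.

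The starting point is that $Y_0 = 0$ (since $X_0 = 0$ makes the fraction vanish), so $\mathbb{E}[Y_n] = Y_0 = 0$ for every $n$. For $q > 1$ the quantity $\frac{1 - q^{X_n}}{1-q} = \frac{q^{X_n} - 1}{q-1}$ is nonnegative, and rearranging $\mathbb{E}[Y_n] = 0$ gives the identity
\begin{equation*}
\mathbb{E}[q^{X_n}] \;=\; 1 + (q-1)\bigl(n - \mathbb{E}[X_n]\bigr) \;\leq\; 1 + (q-1)\,n,
\end{equation*}
where the inequality uses $X_n \geq 0$. Thus the exponential moment of $X_n$ grows at most linearly in $n$, which already forces $X_n$ onto the logarithmic scale.

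Converting this to the almost-sure statement is then routine. By Markov's inequality, for any $C > 0$,
\begin{equation*}
\mathbb{P}\bigl(X_n > C \ln n\bigr) \;=\; \mathbb{P}\bigl(q^{X_n} > n^{C \ln q}\bigr) \;\leq\; \frac{1 + (q-1)\,n}{n^{\,C \ln q}}.
\end{equation*}
Choosing any $C$ with $C \ln q > 2$, the right-hand side is $O\bigl(n^{-(C \ln q - 1)}\bigr)$ with exponent strictly greater than $1$, hence summable in $n$. The Borel--Cantelli lemma then implies that almost surely the event $\{X_n > C \ln n\}$ occurs for only finitely many $n$, so there is a (random) $N$ with $X_n \leq C \ln n$ for all $n \geq N$, which is the conclusion of the theorem.

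The main conceptual step is recognizing that the identity $\mathbb{E}[Y_n] = 0$ pits the linear growth of $n$ against the exponential growth of $q^{X_n}$, automatically capping $X_n$ at the logarithmic scale; this is where the $q > 1$ hypothesis is essential. Everything beyond that is a careful bookkeeping of exponents. The one obstacle worth flagging is that this first-moment argument can only reach $C > 2/\ln q$ because Borel--Cantelli requires summability rather than a $o(1)$ tail; obtaining the presumably optimal constant $C = 1/\ln q$ suggested by $\mathbb{E}[q^{X_n}] = O(n)$ would require a sharper concentration estimate or a subsequence argument exploiting the monotonicity $X_n \leq X_{n+1}$, neither of which is needed for the stated result.
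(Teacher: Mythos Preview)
Your proof is correct and follows essentially the same route as the paper: use $\mathbb{E}[Y_n]=0$ together with $X_n\ge 0$ to bound the first moment of $q^{X_n}$ (equivalently of $\frac{q^{X_n}-1}{q-1}$) linearly in $n$, apply Markov's inequality with a threshold of order $n^{2+\delta}$, and invoke Borel--Cantelli. The only cosmetic difference is that you parametrize the threshold via $C\ln q>2$ whereas the paper writes it as $\lambda=n^{2+\delta}$; these yield the same constraint $C>2/\ln q$ on the final constant.
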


\begin{theorem}\label{thm:q<1}
    For the Markov chain $X_n$, when $0<q<1$, there exists a random variable $Y$ such that
    \[
        \lim_{n\to\infty} X_n - n = Y
    \]
    almost surely.

    This means that there exists an $N$ such that for $n\geq N$,
    \( X_n\geq n - C\)
    almost surely, for some constant $C$ independent of $n$.
\end{theorem}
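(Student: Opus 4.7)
The plan is to exploit the martingale $Y_n = X_n - n + \frac{1-q^{X_n}}{1-q}$ from equation \ref{eq:Y_n} and apply Doob's martingale convergence theorem in the subcritical regime $0 < q < 1$. The key pathwise observation is that every term in $Y_n$ is controlled: since $X_n$ grows by at most one per step and $X_0 = 0$, one has $X_n \leq n$, while the correction term satisfies $\frac{1-q^{X_n}}{1-q} \in [0, \frac{1}{1-q}]$. Thus $Y_n \leq \frac{1}{1-q}$ almost surely, so $\frac{1}{1-q} - Y_n$ is a nonnegative martingale.

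I would then apply the martingale convergence theorem to $\frac{1}{1-q} - Y_n$ to obtain an almost sure limit $Y_n \to Y_\infty$, with $Y_\infty$ integrable. Next, I would show $X_n \to \infty$ almost surely: at every state $i$ the success probability $1 - \alpha(i) = \frac{1}{1+q^i}$ is strictly positive, so the waiting time at level $i$ is geometric with finite expectation, and a simple induction yields transience to infinity. Combining these facts, $q^{X_n} \to 0$ almost surely and hence $\frac{1-q^{X_n}}{1-q} \to \frac{1}{1-q}$, which gives
\[
X_n - n \;=\; Y_n - \frac{1-q^{X_n}}{1-q} \;\longrightarrow\; Y_\infty - \frac{1}{1-q} \;=:\; Y
\]
almost surely, establishing the first assertion. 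The pathwise lower bound then follows: since $Y$ is finite on a set of full measure, on that set one can choose $C$ and $N$ (depending on the sample path) so that $X_n \geq n - C$ for all $n \geq N$.

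The main obstacle is verifying the $L^1$-boundedness of $Y_n$, but this is handled cleanly by the pathwise upper bound $Y_n \leq \frac{1}{1-q}$ combined with $\mathbb{E}[Y_n] = Y_0 = 0$; equivalently, $Y_n$ becomes a nonnegative martingale after a deterministic shift. Everything else—the transience $X_n \to \infty$ and the identification of the limit—is routine once the a.s. convergence of $Y_n$ is in hand.
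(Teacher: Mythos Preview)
Your argument is correct and in fact considerably more economical than the paper's. The paper establishes $X_n \to \infty$ via Azuma's inequality and Borel--Cantelli, obtaining the intermediate quantitative bound $X_n \geq n - n^{1/2+\delta} - \frac{1}{1-q}$ eventually, and then proves $L^2$-boundedness of $Y_n$ by explicitly computing $\mathbb{E}[(Y_i - Y_{i-1})^2] = \mathbb{E}[q^{X_{i-1}}]$ and summing this using the Azuma bound. You bypass both steps: the deterministic bound $Y_n \leq \frac{1}{1-q}$ (from $X_n \leq n$ and $0 \leq \frac{1-q^{X_n}}{1-q} < \frac{1}{1-q}$) makes $\frac{1}{1-q} - Y_n$ a nonnegative martingale, so convergence is immediate; and $X_n \to \infty$ follows from the elementary observation that each geometric waiting time is almost surely finite. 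What the paper's route buys is $L^2$ (hence $L^1$) convergence of $Y_n$ and the explicit sublinear correction from Azuma, neither of which is needed for the statement as written. Your approach is the cleaner proof of the theorem itself.
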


We can also say something about the generalized chain \(X_n^\alpha\)

\begin{theorem}\label{thm:gen}
    For the Markov chain $X_n^\alpha$, if $\alpha(i)>0$ for every $i$, define
    \[
        \Tilde{h}(x)=\sum_{i=0}^{x-1} \left(\frac{\alpha(i)}{1-\alpha(i)}\right)
    \]
    Then this function is invertible on the natural numbers, and for any $\delta>0$, there exists an $N$ such that almost surely, for $n\geq N$
    \[X_n^\alpha \leq \Tilde{h}^{-1}(n^{2+\delta})\]

    Additionally, when $\sum_{i=0}^{n-1} \mathbb E \left(\frac{\alpha(X_{i}^\alpha)}{1-\alpha(X_{i}^\alpha)}\right)$ is bounded in $n$ and $\Tilde{h}(X_n^{\alpha})$ converges as $n\to\infty$, then there exists a random variable $Y^\alpha$ such that almost surely,
    \[
        \lim_{n\to\infty} X_n^\alpha - n = Y
    \]
\end{theorem}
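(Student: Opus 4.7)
The plan is to apply the martingale $Y_n^{\alpha}=X_n^{\alpha}-n+\Tilde{h}(X_n^{\alpha})$ from Theorem \ref{thm:Mart} in both halves of the claim. Because $0<\alpha(i)<1$ for every $i$, each summand of $\Tilde{h}$ is strictly positive, so $\Tilde{h}$ is strictly increasing on $\mathbb{N}$ and admits a monotone inverse on its image (extended to $\mathbb{N}$ in the obvious way). Rearranging the martingale identity and using $X_n^{\alpha}\ge 0$ gives
\[
\Tilde{h}(X_n^{\alpha}) \;=\; Y_n^{\alpha}+(n-X_n^{\alpha}) \;\le\; Y_n^{\alpha}+n,
\]
so any almost sure upper bound on $Y_n^{\alpha}$ translates, after applying $\Tilde{h}^{-1}$, to the desired upper bound on $X_n^{\alpha}$.

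For the first assertion I would compute the conditional variance of the martingale increments. A direct expansion using the two possible moves of the chain gives $\mathbb{E}[(Y_n^{\alpha}-Y_{n-1}^{\alpha})^{2}\mid\mathcal{F}_{n-1}]=\alpha(X_{n-1}^{\alpha})/(1-\alpha(X_{n-1}^{\alpha}))$. Since $X_{n-1}^{\alpha}\le n-1$ deterministically and $\Tilde{h}$ is increasing, this conditional variance is dominated by $\Tilde{h}(n)$, yielding $\mathbb{E}[(Y_n^{\alpha})^{2}]\le n\,\Tilde{h}(n)$. I would then combine Doob's $L^{2}$ maximal inequality with a Borel--Cantelli argument on a suitably chosen (e.g.\ dyadic) subsequence to upgrade this into the almost sure statement $|Y_n^{\alpha}|\le n^{1+\delta/2}$ eventually. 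Plugging back into the displayed inequality gives $\Tilde{h}(X_n^{\alpha})\le n^{2+\delta}$ for all sufficiently large $n$, and applying $\Tilde{h}^{-1}$ yields the desired bound $X_n^{\alpha}\le \Tilde{h}^{-1}(n^{2+\delta})$.

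The second assertion is a cleaner application of the same martingale. The hypothesis that $\sum_{i=0}^{n-1}\mathbb{E}[\alpha(X_i^{\alpha})/(1-\alpha(X_i^{\alpha}))]$ is bounded in $n$ is exactly the statement that the predictable quadratic variation of $Y_n^{\alpha}$ is bounded, so $Y_n^{\alpha}$ is an $L^{2}$-bounded martingale and therefore converges almost surely by Doob's convergence theorem. Rewriting $X_n^{\alpha}-n=Y_n^{\alpha}-\Tilde{h}(X_n^{\alpha})$ then exhibits $X_n^{\alpha}-n$ as a difference of two almost surely convergent sequences, using the assumed convergence of $\Tilde{h}(X_n^{\alpha})$, so $X_n^{\alpha}-n$ converges almost surely to a random variable $Y$.

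The main obstacle is the tightness of the first-part estimate: the bound $\mathbb{E}[(Y_n^{\alpha})^{2}]\le n\,\Tilde{h}(n)$ is quite loose, and a naive Chebyshev step at each single $n$ is not summable for every $\delta>0$. Achieving the stated exponent $2+\delta$ in full generality requires running Doob's maximal inequality on a sparse subsequence (to produce summable probabilities) and then interpolating over intermediate indices via monotonicity of the running maximum; once this is in place, the rest of the argument is bookkeeping.
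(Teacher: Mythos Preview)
Your second-part argument is essentially the paper's: bounded predictable quadratic variation gives $L^2$-boundedness of $Y_n^{\alpha}$, hence almost sure convergence, and then you subtract off the assumed limit of $\Tilde{h}(X_n^{\alpha})$. That is fine.

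The first part, however, has a real gap. Your route goes through a second-moment bound on the martingale itself, $\mathbb{E}[(Y_n^{\alpha})^2]\le n\,\Tilde{h}(n)$, and then aims for $|Y_n^{\alpha}|\le n^{1+\delta/2}$ eventually. The trouble is that the theorem places no growth restriction on $\Tilde{h}$: it only assumes $\alpha(i)>0$. If, say, $\alpha(i)/(1-\alpha(i))=2^i$, then $\Tilde{h}(n)\asymp 2^n$ and your variance bound is of order $2^n$. No choice of sparse subsequence or Doob maximal inequality makes $\mathbb{P}(|Y_n^{\alpha}|>n^{1+\delta/2})$ summable in that regime, so the ``bookkeeping'' you defer cannot be completed. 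The dependence on $\Tilde{h}(n)$ that your second-moment approach introduces is exactly what kills it.

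The paper avoids this entirely by working at the level of first moments of $\Tilde{h}(X_n^{\alpha})$ rather than second moments of $Y_n^{\alpha}$. From $\mathbb{E}[Y_n^{\alpha}]=0$ and $X_n^{\alpha}\ge 0$ one gets $\mathbb{E}[\Tilde{h}(X_n^{\alpha})]\le n$, a bound that is \emph{independent of the growth of $\Tilde{h}$}. Markov's inequality with $\lambda=n^{2+\delta}$ then gives $\mathbb{P}(\Tilde{h}(X_n^{\alpha})\ge n^{2+\delta})\le n^{-1-\delta}$, which is summable, and Borel--Cantelli finishes. This is both much shorter than your outline and, crucially, works for arbitrary $\Tilde{h}$.
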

The upper bound of the above theorem is not necessarily an improvement upon the trivial bound of $X_n^\alpha \leq n$, unless the inverse of \(\tilde{h}\) is sub-linear.
In \(X_n\)'s case, \(\tilde{h}\) was exponential and thus its inverse was logarithmic.

\section{Future Work}\label{sec:future}

Many other Markov chains can be of a similar form to $X_n^\alpha$ where the techniques we use might be helpful.
Growth chains can take multiple, different step lengths.
For example, if the transition $i\to i$ has probability $p(i)$, $i\to i+1$ probability $q(i)$, and $i\to i+2$ is $1-p(i)-q(i)$.
Then a function $f(x,n)$ would be a martingale if it satisfies the difference equation.
\begin{align*}
    \Delta_t f |_{x,n-1} & = -(1-p(x))\Delta_x f |_{x,n} - (1-p(x) -q(x))\Delta_x f |_{x+1,n}
\end{align*}
With conditions on $p,q$ where this can be solved for $f$, we could perform similar analysis as done on our chain.
Further extensions, with steps of different intervals and larger number of steps would also be interesting.

For example, the higher gradings of the quantum group have larger jumps: the second grading, $E^2K^l$, has the Hopf square
\begin{equation*}
    \Psi^2(E^{2}K^{l})=
    E^{2}K^{2l+2}
    +
    q^{(1+2l)}
    (q+q^{-1})
    E^{2}K^{2l+1}
    +
    q^{2(4l)}
    E^{2}K^{2l}
\end{equation*}
Isolating the randomness would leave us with the transition matrix
\begin{equation*}
    T\left(l, j\right)=
    \begin{cases}
        \frac{q^{2l}}{1+q^{1+l}(q+q^{-1})+q^{2l}}              & j=l   \\
        \frac{q^{(1+l)}(q+q^{-1})}{1+q^{1+l}(q+q^{-1})+q^{2l}} & j=l+1 \\
        \frac{1}{1+q^{1+l}(q+q^{-1})+q^{2l}}                   & j=l+2
    \end{cases}
\end{equation*}
We would like to study this specific example and the general case discussed above.

We would like to investigate if there are other quantum groups that, despite not being commutative nor cocommutative, have a Hopf square that gives positive coefficients that lead to a Markov chain.

And finally, we would like to investigate if there are properties of the Hopf algebra that could assist in analyzing the Hopf square, as there were in \cite{DiaconisPangRam}.

\section{Proofs from Section \ref{sec:NewWork}}\label{sec:Proofs}

\subsection*{Proof of Theorem \ref{thm:Dist}}
\begin{proof}
    Since each state has only two options, a ``failure'' (\(i\to i\)) and a ``success'' (\(i\to i+1\)), once the chain leaves a given state it is never repeated.
    Therefore we are looking for the probability of $k$ successes in $n$ trials.

    With exactly $k$ successes, one at each state $0,\ldots, k-1$, they must always contribute
    \begin{equation*}
        \left(\frac{1}{q^{0}+1}\right)\left(\frac{1}{q^{1}+1}\right)\cdots\left(\frac{1}{q^{(k-1)}+1}\right) = ((-1;q)_k)^{-1}
    \end{equation*}
    where $(a;q)_n$ is the $q$-Pochhammer symbol.

    The $n-k$ failures can occur at any of the $k+1$ locations $0,\ldots, k$.
    This is a partition of $n-k$ into $k+1$ parts, $y_0+y_1+\cdots + y_k = n-k$, where $y_i$ indicates the number of failures at state $i$.
    The probability of a specific partition of the failures is
    \begin{equation*}
        \left(\frac{q^{0}}{q^{0}+1}\right)^{y_{0}}\left(\frac{q^{1}}{q^{1}+1}\right)^{y_{1}}\cdots\left(\frac{q^{k}}{q^{k}+1}\right)^{y_{k}}
    \end{equation*}

    Thus, the likelihood of having exactly $k$ successes in $n$ trials is the sum of the probabilities of each individual path taken.
\end{proof}

\subsection*{Proof of Theorem \ref{thm:Hit}}
\begin{proof}
    If we make state $N$ an absorbing state by making a new chain with all of the transition probabilities the same except $P(N,N)=1$, then the hitting time is the same as the expected number of steps until absorption on this finite state space.
    The expected number of steps to being absorbed is equivalent to the expected number of steps to reach state $N$, as each time step can move at most one state.

    To calculate the expected time until absorption, we use the $(N+1)\times (N+1)$ matrix
    \begin{equation*}
        P=
        \left[
            \begin{array}{cccccc}
                \frac{1}{2} & \frac{1}{2}   & 0                     &                   &                           & 0                   \\
                0           & \frac{q}{1+q} & \frac{1}{1+q}         & 0                                                                   \\
                            & 0             & \frac{q^{2}}{1+q^{2}} & \frac{1}{1+q^{2}} & 0                                               \\
                \vdots      &               & 0                     & \ddots            & \ddots                    & 0                   \\
                            &               &                       & 0                 & \frac{q^{N-1}}{1+q^{N-1}} & \frac{1}{1+q^{N-1}} \\
                0           &               & \cdots                &                   & 0                         & 1
            \end{array}
            \right]
        =\left[
            \begin{array}{cc}
                Q & R     \\
                0 & I_{1}
            \end{array}
            \right]
    \end{equation*}
    where $Q$ is the $N\times N$ matrix in the upper left.
    We calculate $W=\left(I_{N}-Q\right)^{-1}$ to be
    \begin{equation*}
        W=\left[\begin{array}{ccccc}
                2      & 1+q & \dots  & 1+q^{N-2} & 1+q^{N-1} \\
                0      & 1+q & \dots  & 1+q^{N-2} & 1+q^{N-1} \\
                       & 0   & \ddots & \vdots    & \vdots    \\
                \vdots &     & \ddots & 1+q^{N-2} & 1+q^{N-1} \\
                0      &     & \cdots & 0         & 1+q^{N-1}
            \end{array}\right]
    \end{equation*}
    The theory of absorbing chains says the expected number of steps before being absorbed in any absorbing state (in our case, state $N$), starting in state $i$, is the sum of the $i^{th}$ row of $W$.
    Equivalently, it is the corresponding row of the column vector
    \begin{equation*}
        \vec{t}:=W\vec{1}=
        \left[\begin{array}{ccccc}
                \sum_{i=0}^{N-1}\left(1+q^{i}\right),   &
                \sum_{i=1}^{N-1}\left(1+q^{i}\right),   &
                \cdots,                                 &
                \sum_{i=N-2}^{N-1}\left(1+q^{i}\right), &
                1+q^{N-1}
            \end{array}\right]^T
    \end{equation*}
    Starting in state $0$, the hitting time is
    \(\sum_{i=0}^{N-1}\left(1+q^{i}\right) = N+\frac{1-q^{N}}{1-q}\).
\end{proof}

\subsection*{Proof of Theorem \ref{thm:phase}}
\begin{proof}
    For notation, collect the failure probabilities $\alpha = \{\alpha(0), \alpha(1), \ldots\}$.
    % Let $X_0^{\alpha}=0$.
    The random variables $X_{n}^{\alpha}$ follow the transition matrix
    \begin{equation*}
        T_\alpha=
        \left[\begin{array}{ccccc}
                \alpha(0) & 1-\alpha(0) & 0           & 0           & \cdots \\
                0         & \alpha(1)   & 1-\alpha(1) & 0           & \cdots \\
                0         & 0           & \alpha(2)   & 1-\alpha(2) & \ddots \\
                \vdots    & \vdots      & \ddots      & \ddots      & \ddots \\
            \end{array}\right]
    \end{equation*}
    Where the first row and column correspond to state $0$.
    See that once $X_{n}^{\alpha}=1$, the failure probabilities no longer use $\alpha(0)$, they only depend on $\Sigma \alpha = \{\alpha(1),\alpha(2),\ldots\}$.

    For example, if $X_{1}^{\alpha}=1$ (we succeed at $t=0$), $X_{n}^{\alpha}$ from then on follows the above transition matrix with the first row and column removed, labeled $T_{\Sigma \alpha}$
    The new first row and column correspond to state $1$.
    The new random variable $X_{n}^{\Sigma\alpha}$ defined by \(T_{\Sigma\alpha}\) is of the same form, with $X_{0}^{\Sigma\alpha}=1$.
    On the other hand, if $X_{1}^{\alpha}=0$, then \(X_{2}^{\alpha}\) comes from $T_\alpha$ and the chain has lost one time step.

    Conditioning the expectation on the result of the first time step, we get
    \begin{align*}
        \mathbb{E}(X^{\alpha}_{n}) & = P(X^{\alpha}_{1}=1)\mathbb{E}(X^{\alpha}_{n}|X_{1}^{\alpha}=1)+P(X^{\alpha}_{1}=0)\mathbb{E}(X^{\alpha}_{n}|X_{1}^{\alpha}=0) \\
                                   & =(1-\alpha(0))\mathbb{E}(X_{n-1}^{\Sigma\alpha})+\alpha(0)\mathbb{E}(X_{n-1}^{\alpha})
    \end{align*}
    And we are only looking for the asymptotic behavior of the expectation in the large $n$ limit, when $n/(n-1)\approx 1$
    \begin{equation*}
        \lim_{n\rightarrow\infty}\frac{\mathbb{E}(X^{\alpha}_{n})}{n}
        =(1-\alpha(0))\lim_{n\rightarrow\infty}\frac{\mathbb{E}(X_{n-1}^{\Sigma\alpha})}{n-1}
        +\alpha(0)\lim_{n\rightarrow\infty}\frac{\mathbb{E}(X_{n-1}^{\alpha})}{n-1}
    \end{equation*}
    By re-indexing, since $\alpha(0)\neq1$
    \begin{equation*}
        \lim_{n\rightarrow\infty}\frac{\mathbb{E}(X^{\alpha}_{n})}{n}=\lim_{n\rightarrow\infty}\frac{\mathbb{E}(X_{n}^{\Sigma\alpha})}{n}
    \end{equation*}

    The same process works when starting with $X^{\Sigma\alpha}_{n}$, or $X^{\Sigma^2\alpha}_{n}$: conditioning on the first time steps of these random variables,
    \begin{equation*}
        \lim_{n\rightarrow\infty}\frac{\mathbb{E}(X^{\Sigma^k\alpha}_{n})}{n}=\lim_{n\rightarrow\infty}\frac{\mathbb{E}(X_{n}^{\Sigma^{k+1}\alpha})}{n}
    \end{equation*}
    Chaining the equalities, for every $k$, $\lim_{n\rightarrow\infty}\frac{\mathbb{E}(X^{\alpha}_{n})}{n}=\lim_{n\rightarrow\infty}\frac{\mathbb{E}(X_{n}^{\Sigma^{k}\alpha})}{n}$

    In the $q<1$ case, the $(1-\alpha(n))$ are strictly increasing, and thus bounded below by the first, $1-\alpha(0)$.
    This implies that the success probabilities for $X_{n}^{\Sigma^k\alpha}$ are bounded below by $1-\alpha(k)$.
    Since our random variable is the number of successes in $n$ trials, then the number of trials times the lowest possible probability of success is a lower bound on the expectation:
    \begin{equation*} \label{eq:ExpBound}
        \mathbb{E}(X^{\Sigma^k\alpha}_{n})\geq n(1-\alpha(k))
    \end{equation*}
    Thus, when $q<1$, we get the bound
    \(
    \lim_{n\rightarrow\infty}\frac{\mathbb{E}(X^{\alpha}_{n})}{n}\geq1-\alpha(k)
    \)
    for every $k$.
    Since $\lim_{k\to\infty}(1-\alpha(k))=1$,
    \begin{equation}
        \lim_{n\rightarrow\infty}\frac{\mathbb{E}(X^{\alpha}_{n})}{n}=1
    \end{equation}

    The other case works similarly: if $q>1$, then the probability of success is strictly decreasing, so the probability of success at the first state is now an upper bound for all of the success probabilities.
    This means that for every $k$,
    \begin{equation*}
        \lim_{n\rightarrow\infty}\frac{\mathbb{E}(X^{\alpha}_{n})}{n}\leq 1-\alpha(k)
    \end{equation*}
    for any $k\geq 1$.
    Then $\lim_{k\to\infty}(1-\alpha(k))=0$ implies that $\lim_{n\rightarrow\infty}\frac{\mathbb{E}(X^{\alpha}_{n})}{n}=0$

    When $q=1$, the probability of success at each stage is always $\frac{1}{2}$, so the expectation of the binomial distribution divided by $n$ is $\frac{1}{2}$, and the limit holds through.
\end{proof}

\subsection*{Proof of Theorem \ref{thm:Mart}}
\begin{proof}
    Equation \ref{eq:Y_n^alpha} is claimed to be a Martingale:
    \begin{align*}
        \mathbb E [Y_n^{\alpha} \mid Y_{n-1}^{\alpha}]
         & = \mathbb E \left[X_n^{\alpha} - n + \sum_{i=0}^{X_n^{\alpha}-1} \frac{\alpha(i)}{(1-\alpha(i))} \bigg\vert X_{n-1}^{\alpha}\right]                                                          \\
         & = X_{n-1}^{\alpha}\alpha(X_{n-1}^{\alpha}) + (X_{n-1}^{\alpha}+1)(1-\alpha(X_{n-1}^{\alpha})) - n                                                                                            \\
         & \qquad + \alpha(X_{n-1}^{\alpha})\sum_{i=0}^{X_{n-1}^{\alpha}-1} \frac{\alpha(i)}{(1-\alpha(i))} + (1-\alpha(X_{n-1}^{\alpha}))\sum_{i=0}^{X_{n-1}^{\alpha}} \frac{\alpha(i)}{(1-\alpha(i))} \\
         & = X_{n-1}^{\alpha}  - (n-1)  + \sum_{i=0}^{X_{n-1}^{\alpha}-1} \frac{\alpha(i)}{(1-\alpha(i))}  = Y_{n-1}^{\alpha}
    \end{align*}
\end{proof}

\subsection*{Proof of Theorem \ref{thm:q>1}}
\begin{proof}
    First, since $Y_0= X_0 - 0 +\frac{1-q^{0}}{1-q}=0$ and for every $n$,
    \[
        \mathbb E [Y_n] = \mathbb E [ \mathbb E [Y_n|Y_{n-1}]] = \mathbb E [ Y_{n-1}]
    \]
    then $\mathbb E [Y_n] = \mathbb E [ Y_{0}]=0$ for every $n$.
    The definition of \(Y_n\) implies that $\mathbb E [X_n] =  n - \mathbb E [\frac{q^{X_n}-1}{q-1}]$, and as $X_n$ is non-negative, its expectation must be non-negative. Therefore, regardless of $q$,
    \begin{equation*}
        \mathbb E \left[\frac{q^{X_n}-1}{q-1}\right] \leq n
    \end{equation*}
    Using Markov's Inequality, we then know that for any $\lambda>0$,
    \begin{equation*}
        \mathbb P \left( \frac{q^{X_n} - 1}{q-1} \geq \lambda\right) \leq
        \frac{1}{\lambda} \mathbb E \left[\frac{q^{X_n}-1}{q-1}\right] \leq
        \frac{n}{\lambda}
    \end{equation*}
    Define the events $E_n := \left\{\frac{q^{X_n} - 1}{q-1} \geq \lambda\right\}$ for each $n\geq 0$.
    To use the Borel-Cantelli lemma, we want the sum of the probability of the events to be finite.
    So we choose $\lambda=n^{2+\delta}$ for some fixed $\delta>0$, so we have the bound $P(E_n)\leq 1/n^{1+\delta}$, and thus
    \begin{equation*}
        \sum_{n=0}^\infty \mathbb P ( E_n) \leq \sum_{n=0}^\infty \frac{1}{n^{1+\delta}} <\infty
    \end{equation*}
    By Borel-Cantelli, with probability $1$, only finitely many of the events $E_n$ can occur.
    Thus, there is an $N$ such that $E_n$ does not occur for $n\geq N$.
    For those $n$, since $q>1$,
    \begin{equation*}
        \frac{q^{X_n} - 1}{q-1} \leq n^{2+\delta}\implies {X_n} \leq \log_{q}\left(n^{2+\delta}({q-1}) +1\right) \leq C \ln(n)
    \end{equation*}
    which gives a logarithmic bound of $X_n$ almost surely when $q>1$.
\end{proof}

\subsection*{Proof of Theorem \ref{thm:q<1}}
\begin{proof}
    % We first use Azuma's inequality, 

    %Second way
    We first show that $\lim_{n\to\infty}X_n = \infty$ almost surely.

    We do so by using Azuma's inequality (see \cite{Azuma}), that states that if $\{Y_n\}$ is a martingale and $|Y_n-Y_{n-1}|\leq c_n$ for constants $c_n$, then for any $\epsilon>0$
    \begin{equation*}
        \mathbb P (Y_n-Y_0 \leq -\epsilon) \leq \exp\left(\frac{-\epsilon^2}{2\sum_{k=1}^n c^2_k} \right)
    \end{equation*}
    Note that since $q<1$
    \begin{align*}
        |Y_n -Y_{n-1}| & = | X_n + \frac{q^{X_n}-1}{q-1} -n - X_{n-1} - \frac{q^{X_{n-1}}-1}{q-1} +n-1| \\
                       & \leq \frac{q^{X_{n-1}+1}-q^{X_{n-1}}}{q-1} + 1 = 1+ q^{X_{n-1}} \leq 2
    \end{align*}
    Thus, since $Y_0=0$ we know that
    \begin{equation*}
        \mathbb P (X_n + \frac{q^{X_n}-1}{q-1} -n \leq -\epsilon) \leq \exp\left(\frac{-\epsilon^2}{8n} \right)
    \end{equation*}
    We again want to use Borel-Cantelli, so fixing some $\delta>0$, let $\epsilon =n^{1/2 + \delta}$.
    Then the sum of the probability of the events $E_n = \{X_n + \frac{q^{X_n}-1}{q-1} -n \geq n^{1/2 + \delta}\}$ is finite.
    For some \(N\), if $n\geq N$, we have with probability $1$
    \begin{equation}\label{eq:Azuma}
        X_n + \frac{q^{X_n}-1}{q-1} -n  \geq -n^{1/2 + \delta}
        \implies
        X_n \geq n - n^{1/2 + \delta} - \frac{1}{1-q}
    \end{equation}
    The last inequality following from the bound $\frac{q^x-1}{q-1}\geq \frac{1}{1-q}$ when $q<1$.
    Thus, $X_n\to\infty$ as $n\to\infty$ almost surely.

    Secondly, we show that the Martingale has bounded variance, which shows it is bounded in $L^1$, and thus we can use the Martingale convergence theorem.
    See that, as in the previous proof, $\mathbb E Y_n = \mathbb E Y_0 = Y_0 = 0 $, so
    \begin{align*}
        \mathbb E [Y^2_n-(\mathbb E Y_n)^2] & = \mathbb E [Y_n^2- (\mathbb E Y_0)^2] = \mathbb E [Y_n^2-Y_{0}^2] = \sum_{i=1}^n \mathbb E [ Y^2_i-Y^2_{i-1}] \\
        % &= \sum_{i=1}^n \mathbb E [ \mathbb E[Y^2_i -Y^2_{i-1} | Y_0,\ldots, Y_{i-1}]] \\
                                            & = \sum_{i=1}^n \mathbb E [ \mathbb E[Y^2_i-2Y_iY_{i-1}+Y^2_{i-1}| Y_0,\ldots, Y_{i-1}]]
        % &= \sum_{i=1}^n \mathbb E [ \mathbb E[(Y_i-Y_{i-1})^2| Y_0,\ldots, Y_{i-1}]]  \\
        = \sum_{i=1}^n \mathbb E [(Y_i-Y_{i-1})^2]
    \end{align*}
    With probability $ \frac{q^{X_{n-1}}}{1+q^{X_{n-1}}}$, $X_n=X_{n-1}$, so
    \begin{align*}
        Y_n -Y_{n-1}
        % &=  X_n + \frac{q^{X_n}-1}{q-1} -n - X_{n-1} - \frac{q^{X_{n-1}}-1}{q-1} +n-1 \\
         & = X_{n-1} + \frac{q^{X_{n-1}}-1}{q-1} -n - X_{n-1} - \frac{q^{X_{n-1}}-1}{q-1} +n-1
        % \\
        % &
        =  -1
    \end{align*}
    Otherwise, with probability $ \frac{1}{1+q^{X_{n-1}}}$,
    \begin{align*}
        Y_n -Y_{n-1}
        % &=  X_n + \frac{q^{X_n}-1}{q-1} -n - X_{n-1} - \frac{q^{X_{n-1}}-1}{q-1} +n-1 \\
         & = X_{n-1} + 1 + \frac{q^{X_{n-1} + 1}-1}{q-1} -n - X_{n-1} - \frac{q^{X_{n-1}}-1}{q-1} +n-1 \\
         & =   \frac{q^{X_{n-1} + 1}- q^{X_{n-1}}}{q-1}
        % \\
        % &=   \frac{q^{X_{n-1}}(q- 1)}{q-1} \\
        % &
        =   q^{X_{n-1}}
    \end{align*}
    Thus, any increment satisfies
    \begin{align*}
        \mathbb E [(Y_i- Y_{i-1})^2] %&= \mathbb E [\mathbb E [(Y_i-Y_{i-1})^2|Y_{i-1}]] \\
         & = \mathbb E [ (-1)^2 \frac{q^{X_{i-1}}}{1+q^{X_{i-1}}} +  q^{2X_{i-1}} \frac{1}{1+q^{X_{i-1}}}]
        % &= \mathbb E [ \frac{q^{X_{i-1}}}{1+q^{X_{i-1}}} (1+ q^{X_{i-1}})] \\
        = \mathbb E [ q^{X_{i-1}}]
    \end{align*}
    Which means that the variance of the martingale is $\sum_{i=1}^n \mathbb E [ q^{X_{i-1}}]$.
    We want an upper bound on $q^{X_{i-1}}$ when $0<q<1$ to show that this sum is uniformly bounded.

    Equation \ref{eq:Azuma} says that $q^{X_{i-1}}\leq Cq^{i-\sqrt{i}}$, so the variance of the martingale is at most $C\sum_{i=1}^n [ q^{i-\sqrt i}]$, which converges as \(n\to\infty\).
    Thus, the martingale is bounded in $L^2$ and therefore $\sup_{n} [Y_n] <\infty $.

    The Martingale Convergence theorem applies (see \cite{durrett}).
    As a result, there exists some random variable $Y$ such that $\lim_{n\to\infty}Y_n = Y_\infty$ almost surely.
    Combining this with the fact that $q^{X_n}\to 0$, then we have the first result, that if $Y=Y_\infty -\frac{1}{1-q}$,
    \(
    \lim_{n\to\infty} X_n - n = Y
    \)
    For some $N$ close enough to the convergence, any $n\geq N$ gives $X_n - n + \frac{1-q^n}{1-q} - Y_\infty > -\epsilon$.
    Taking a possibly larger $N$ for the other limit gives, almost surely
    \begin{equation*}
        X_n -n  >  Y_\infty - \frac{1}{1-q}- \epsilon = - C
    \end{equation*}
\end{proof}

\subsection*{Proof of \ref{thm:gen}}
\begin{proof}
    Equation \ref{eq:Y_n^alpha} gives a Martingale in terms of $X_n^\alpha$,
    \begin{equation*}
        Y_n^{\alpha} = X_n^{\alpha} - n + \sum_{i=0}^{X_n^{\alpha}-1} \frac{\alpha(i)}{(1-\alpha(i))}
    \end{equation*}
    The function $\Tilde{h}(x)$, in the case of the previous theorems, is $\frac{q^{X_n}-1}{q-1}$.
    The argument from the proof of Theorem \ref{thm:q>1} follows in the same way: $\Tilde{h}(X^\alpha_n)$ replaces $\frac{q^{X_n}-1}{q-1}$ everywhere, giving the result that for any $\delta>0$ there exists some $N$, if $n\geq N$
    \begin{equation*}
        \Tilde{h}(X_n^\alpha) \leq n^{2+\delta}
    \end{equation*}
    almost surely.
    Note that $\Tilde{h}(x)< \Tilde{h}(x+1)$ as each $\frac{1}{1-\alpha(x)}$ is strictly positive, so $\Tilde{h}$ has an inverse.
    Thus we get the bound
    \begin{equation*}
        X_n^\alpha \leq \Tilde{h}^{-1}(n^{2+\delta})
    \end{equation*}

    For an upper bound, the proof of Theorem \ref{thm:q<1} requires the variance be bounded:
    \[
        \mathbb E [Y_n^2 - (\mathbb E Y_n)^2] = \sum_{i=1}^n \mathbb E \left(\frac{\alpha(X_{i-1}^\alpha)}{1-\alpha(X_{i-1}^\alpha)}\right)
    \]
    When the variance above is uniformly bounded in $n$, we can use the Martingale Convergence Theorem to say that $X_n^\alpha - n + \Tilde{h}(X^\alpha_n)\to Y_\infty$.
    Even further, when $\Tilde{h}(X^\alpha_n)\to C$, we get that $X_n^\alpha - n$ approaches a random variable.

\end{proof}

\backmatter

\bibliography{HopfAlgBib.bib}% common bib file
%% if required, the content of .bbl file can be included here once bbl is generated
%%\input sn-article.bbl

\end{document}